\newtheorem{lemma}{Lemma}
\newtheorem{proposition}{Proposition}
\newtheorem{theorem}{Theorem}
\newcommand{\Sph}{{\cal S}}
\newcommand{\interior}{\mathop{\mathrm{int}}}
\newcommand{\aff}{\mathop{\mathrm{aff}}}
\newcommand{\co}{\mathop{\mathrm{co}}}
\newcommand{\rec}{\mathop{\mathrm{rec}}}
\newcommand{\lspan}{\mathop{\mathrm{span}}}
\newcommand{\cone}{\mathop{\mathrm{cone}}}
\newcommand{\R}{\mathbb{R}}
\title{Facially exposed cones are not always nice}
\author{Vera Roshchina\thanks{CRN, University of Ballarat, Australia; e-mail: vroshchina@ballarat.edu.au.}}
\begin{document}

\maketitle

\begin{abstract} We address the conjecture proposed by G\'abor Pataki that every facially exposed cone is nice. We show that the conjecture is true in the three-dimensional case, however, there exists a four-dimensional counterexample of a cone that is facially exposed but is not nice.
\end{abstract}

\section{Introduction}

A closed convex cone $K\subset \R^n$ is called {\em nice} if for every face $F$ of $K$ the Minkowski sum $K^*+F^\perp$ is closed (here $F^\perp$ is the orthogonal complement to the linear span of $F$, and $K^*$ is the dual cone of $K$). Such cones are also called facially dual complete~\cite{TW2012}. Nice cones provide a simple characterisation of the closedness of the linear image of the dual of a closed convex cone \cite{PatakiClLinIm}, they feature in the study of lifting of convex sets~\cite{Lifts},  and in the analysis of the facial reduction algorithm \cite{BorWolk,PatakiFacRed}. Facial exposedness is another classical notion, and it is important in the analysis of optimization problems (e.g. see \cite{TamSchneider,WolkEtAlSDP}). It is known that homogeneous \cite{TruongTuncel} and projectionally exposed cones \cite{SungTamProj} are facially exposed, and that niceness is preserved under SDP representations \cite{CT2008}. The standard cones used in optimization (nonnegative orthant, second-order cone and the cone of positively semidefinite matrices) are both facially exposed and nice. 

In \cite{PatakiFExpNice} a systematic study of nice cones is undertaken. In particular, it is shown that every nice cone is facially exposed, and that niceness is guaranteed by facial exposedness with an additional condition that for all $F$ faces of $K$ and all $H$ faces of $F^*$ which are minimal and distinct from $F^\perp$, the face  $H$  is exposed. It is shown that this condition is not necessary for niceness though, which leads to the conjecture that all facially exposed cones are nice. The goal of this work is to demonstrate that while the conjecture is true in up to three dimensions, it fails in general: we show that there exists a four-dimensional convex closed cone that is facially exposed but is not nice. 

Our paper is organised as follows: in Section~\ref{sec:prelim} we remind some essential definitions and prove that all three-dimensional facially exposed cones are nice. In Section~\ref{sec:tech} we obtain some general results, which we use in Section~\ref{sec:4D} to analyse the four-dimensional counterexample. Throughout the paper, by $\R^n$ we denote the $n$-dimensional Euclidean space; for $x,y\in \R^n$, we let $\langle x,y\rangle := \sum_{i=1}^n x_i y_i$, and $\|x\|: = \sqrt{\langle x,x \rangle}$ denote the scalar product and Euclidean norm. By $\Sph_{n-1}$ we denote the unit sphere in the relevant $n$-dimensional space. For a set $C\subset \R^n$ by $\aff C$, $\co C$ and $\cone C$ we denote, respectively, its affine, convex and conic hulls.

\section{Preliminaries and the three-dimensional case}\label{sec:prelim}

A nonempty convex subset $F$ of a convex closed set $C\subset \R^n$ is called a {\em face} of $C$ if $\alpha x +(1-\alpha) y\in F$ with $x,y\in C$ and $\alpha \in (0,1)$ imply $x,y\in F$. 
We use the standard notation $F\unlhd C$ to denote that $F$ is a face of $C$. Observe that $C$ is its own face. We say that $F\unlhd C$ is {\em proper} if $F\neq C$. 

When $K\subset \R^n$ is a closed convex cone, a face can be defined equivalently as a  convex closed subset $F$ of $K$ such that $x+y\in F$ with $x,y\in K$ imply $x,y\in F$.

A face $F$  of a closed convex set  $C\subset \R^n$ is called {\em exposed} if it can be represented as the intersection of $C$ with a supporting hyperplane, i.e. there exist $y\in \R^n$ and $d\in \R$ such that for all $x\in C$
\begin{equation}\label{eq:defExposed}
\langle y,x\rangle \leq  d \quad \forall \, x\in C;\qquad  \langle y,x\rangle = d\quad  \text{iff}\quad  x\in F.
\end{equation}
We say that a pair $(y,d)\in \R^{n+1}$ exposes $F\unlhd C$ if $(y,d)$ satisfy  \eqref{eq:defExposed}. A set is {\em facially exposed} if all of its faces are exposed. 

Observe that for every hyperplane $H=\{x\in \R^n\,|\, \langle x,y\rangle = d\}$ supporting a closed convex set $C\subset \R^n$ the set $C\cap H$ is a face of $C$; moreover, $C$ is always an exposed face of itself (letting $(y,d) = (0_n,0)$). It is not difficult to observe that for a cone $K$ and any pair $(y,d)$ exposing a face $F\unlhd K$ we have $d=0$.

Let $C$ be a convex set in $\R^n$. The {\em polar} of $C$ is the set 
$$
C^\circ = \{y\in \R^n\, |\, \sup_{x\in C}\langle x,y\rangle \leq 1\}.
$$
When $K\subset \R^n$ is a cone, the polar set coincides with the {\em polar cone} of $K$:
$$
K^\circ = \{y\in \R^n\, |\, \sup_{x\in K}\langle x,y\rangle \leq 0\}.
$$ 
The {\em dual cone} $K^*$ of $K$ is $K^* = -K^\circ$. Observe that $K^\circ$ and $K^*$ are always closed.

A cone $K$ is called {\em nice} if for every face $F$ of $K$ the set $K^*+F^\perp$ is closed, where $F^\perp$ is the orthogonal complement to $\lspan F$. 

We will also use the notion of {\em recession cone} 
$$
\rec C : = \{d\in \R^n\,|\, \exists x\in C \quad x+td \in C \quad \forall \, t\geq 0\}.
$$

To prove that every facially exposed set in the three-dimensional space is nice, we will need the following two trivial statements

\begin{lemma}\label{lem:SumSubspace} Let $C\subset \R^n$, and let $L$ be a linear subspace of $\R^n$. Then
$$
C+L^\perp = \Pi_{L} C +L^\perp,
$$ 
where by $\Pi_L$ we denote the orthogonal projection onto the linear subspace $L$.
\end{lemma}

\begin{lemma}\label{lem:sumClosedSetSubspace} Let $L\subset \R^n$ be a linear subspace, and assume $C\subset L$ is closed. Then the set $C+L^\perp$ is closed.
\end{lemma}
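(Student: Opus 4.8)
The plan is to use the orthogonal direct sum decomposition $\R^n = L \oplus L^\perp$ together with the continuity of the orthogonal projection $\Pi_L$, and to verify closedness via sequences. Since $\R^n$ is finite-dimensional, it suffices to take an arbitrary convergent sequence $(z_k)$ with each $z_k \in C + L^\perp$ and limit $z = \lim_k z_k$, and to show that $z \in C + L^\perp$.

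First I would write each $z_k = c_k + w_k$ with $c_k \in C \subset L$ and $w_k \in L^\perp$, and observe that applying $\Pi_L$ recovers the $C$-component exactly: because $c_k \in L$ gives $\Pi_L c_k = c_k$, and $w_k \in L^\perp$ gives $\Pi_L w_k = 0$, we get $\Pi_L z_k = c_k$. The key step is then to invoke continuity of the linear map $\Pi_L$: from $z_k \to z$ we obtain $c_k = \Pi_L z_k \to \Pi_L z$. Since every $c_k$ lies in $C$ and $C$ is closed, the limit satisfies $\Pi_L z \in C$.

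To conclude, I would reassemble $z$ using the orthogonal decomposition: $z = \Pi_L z + \Pi_{L^\perp} z$, where $\Pi_L z \in C$ by the previous step and $\Pi_{L^\perp} z \in L^\perp$ trivially. Hence $z \in C + L^\perp$, establishing closedness.

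There is no genuine obstacle here — the statement is elementary — but the one point that must be handled carefully is the identity $\Pi_L z_k = c_k$, which relies essentially on the hypothesis $C \subset L$ and on the orthogonality of the splitting (so that the $L^\perp$-part is annihilated by $\Pi_L$). If one wished to avoid sequences entirely, an alternative would be to combine this observation with Lemma~\ref{lem:SumSubspace} and the fact that a closed subset of $L$ remains closed in $\R^n$, but the direct projection argument above is the most transparent route.
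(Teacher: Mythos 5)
Your argument is correct: the identity $\Pi_L z_k = c_k$ (valid precisely because $C\subset L$ and the splitting is orthogonal) combined with continuity of $\Pi_L$ and closedness of $C$ gives exactly what is needed. The paper states this lemma without proof, labelling it trivial, and your projection-based sequence argument is the standard way to fill in that omission.
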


\begin{theorem}\label{thm:3Donly} A convex closed facially exposed cone $K\subset \R^3$ is nice.
\end{theorem}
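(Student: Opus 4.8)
The plan is to translate niceness into the closedness of certain orthogonal projections of $K^*$, to observe that the only projections that can fail to be closed are those onto the span of a two-dimensional face, and finally to show that in $\R^3$ such a failure would produce a non-exposed extreme ray. Concretely, writing $W=\lspan F$ for a face $F\unlhd K$, Lemma~\ref{lem:SumSubspace} gives $K^*+F^\perp=\Pi_W K^*+W^\perp$, so by Lemma~\ref{lem:sumClosedSetSubspace} it suffices to prove that $\Pi_W K^*$ is closed for every face $F$. The cases $\dim W\in\{0,1,3\}$ are immediate: the projection is $\{0\}$, respectively $K^*$ (when $F=K$), while for $\dim W=1$ the image $\Pi_W K^*$ is a convex cone inside a line, hence $\{0\}$, a closed ray, or the whole line, and is closed in any case. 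Thus everything reduces to $\dim W=2$. A proper two-dimensional face can occur only when $K$ is full-dimensional, in which case $F$ is a facet; being a facet it is exposed by a hyperplane through the origin, which must be $W=u^\perp$, so that $K\cap W=F$, $F^\perp=\R u$, $u\in K^*$ and $-u\notin K^*$.

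For such a facet set $S:=\Pi_W K^*\subseteq W$. The starting point is the projection--slice duality $S^{*}=(K^*)^*\cap W=K\cap W=F$, where duals are taken inside the plane $W$; dualising again gives $\overline S=F^{*}$. If $F$ contains a line then $\overline S$ is at most one-dimensional and $S$ is closed as before, so assume $F$ is a pointed sector; then so is $\overline S$. A planar convex cone contains the relative interior of its closure, and by homogeneity each of the two bounding rays of $\overline S$ either lies in $S$ or meets $S$ only at the origin. Hence, if $S$ is not closed, an entire bounding ray $\R_{\ge 0}w^*$ of $\overline S$ is absent from $S$. Let $n^*$ be the extreme ray of $F=S^{*}$ supporting $\overline S$ exactly along this edge, so $\langle n^*,w^*\rangle=0$ and $\{s\in\overline S:\langle n^*,s\rangle=0\}=\R_{\ge 0}w^*$.

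I then claim that $\R_{\ge 0}n^*$ is a non-exposed extreme ray of $K$. Take $y\in K^*\cap(n^*)^\perp$ and write $y=y'+tu$ with $y'=\Pi_W y\in S$; since $n^*\in W$ is orthogonal to $u$, we get $0=\langle n^*,y\rangle=\langle n^*,y'\rangle$, forcing $y'\in\R_{\ge 0}w^*\cap S=\{0\}$, so $y=tu\in K^*$ and therefore $t\ge 0$. Thus $K^*\cap(n^*)^\perp=\R_{\ge 0}u$. Any functional exposing $\R_{\ge 0}n^*$ would lie in this ray, i.e. be a nonnegative multiple of $u$; but $u$ satisfies $K\cap u^\perp=F$ and so exposes the whole two-dimensional facet $F$, not the one-dimensional ray $\R_{\ge 0}n^*$. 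Since $\R_{\ge 0}n^*$, being an edge of the face $F$, is itself an extreme ray of $K$, this contradicts facial exposedness. By contraposition, for a facially exposed $K$ every facet yields a closed $S$, all projections $\Pi_W K^*$ are closed, and $K$ is nice.

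The step I expect to be the main obstacle is the crux above: establishing the projection--slice identity $S^{*}=K\cap W$ and, more importantly, converting a missing bounding ray of the planar cone $\overline S$ into the dimensional collapse $K^*\cap(n^*)^\perp=\R_{\ge 0}u$. This is precisely where the two-dimensionality of $W$ is essential --- it forces $\overline S$ to be a sector whose two edges correspond bijectively to the extreme rays of $F$, so that a single absent edge isolates a single offending extreme ray --- and it is the mechanism one expects to fail one dimension higher, in accordance with the announced four-dimensional counterexample.
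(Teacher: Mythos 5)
Your proof is correct, and although it shares the paper's skeleton --- reduce via Lemmas~\ref{lem:SumSubspace} and \ref{lem:sumClosedSetSubspace} to the closedness of $\Pi_{\lspan F}K^*$, dispose of the dimensions $0,1,3$ at once, and let the exposedness of the extreme rays of a two-dimensional face carry all the weight --- you run the crucial step in the opposite logical direction. The paper argues directly: it takes the functionals $h_1,h_2\in K^*$ exposing the two extreme rays $\cone\{p_i\}$ of $F$, verifies the sign pattern $\langle \Pi_{\lspan F}h_i,p_i\rangle=0$ and $\langle \Pi_{\lspan F}h_i,p_j\rangle>0$, concludes $F^*=\cone\{h_1,h_2\}+F^\perp\subset K^*+F^\perp$, and closes with the cited identity $F^*=\overline{K^*+F^\perp}$. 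You argue by contraposition: the projection--slice duality $(\Pi_W K^*)^*=K\cap W=F$ identifies $\overline{\Pi_W K^*}$ with the planar dual of $F$, non-closedness forces an entire bounding ray of that sector to be absent, and the absent ray collapses $K^*\cap(n^*)^\perp$ to the single ray $\R_{\geq 0}u$, so the corresponding extreme ray of $F$ cannot be exposed. Both arguments rest on the same planar duality between the extreme rays of $F$ and the edges of its dual sector, so neither is more general, but yours buys two things: it replaces the external fact $F^*=\overline{K^*+F^\perp}$ by biduality in the plane, and it explicitly covers a non-pointed two-dimensional face (a half-plane or plane), which the paper's parametrisation $F=\cone\{p_1,p_2\}$ with non-collinear $p_1,p_2$ quietly skips. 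The only loose end, easily patched, is the improper face $F=K$ when $\dim K=2$: it falls outside your case split but is trivial, since then $F^\perp\subset K^*$ and $K^*+F^\perp=K^*$.
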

\begin{proof} Let $K\subset \R^3$ be a facially exposed convex closed cone, and let $F \unlhd K$. Observe that if $\lspan F = \R^3$, we have $F^\perp = \{0\}$, and the set $K^*+F^\perp = K^*$ is closed since the dual cone is closed. Likewise, if $\lspan F = \{0\}$, then $F^\perp = \R^3$ and $K^*+F^\perp = \R^3$ is closed. In the case when $F$ is one-dimensional, i.e. $\lspan F = \lspan \{l\}$ for some $l\neq 0$, observe that $\Pi_{\lspan\{ l\}} K^*$ is a one-dimensional cone that contains zero, which is always closed. Hence, using Lemmas~\ref{lem:SumSubspace} and \ref{lem:sumClosedSetSubspace} the set
$$
K^*+F^\perp = \Pi_{\lspan \{l\}}+F^\perp
$$
is closed. It remains to consider the case when $F$ is two-dimensional, i.e. $F = \cone \{p_1, p_2\}$, where $p_1, p_2 \in \Sph_2$ are non-collinear. Observe that $E_i := \cone \{p_i\}$ is a face of $K$ for each $i\in \{1,2\}$ (see Fig.~\ref{fig:03}). 
\begin{figure}[ht]
\centering
\includegraphics[scale=1]{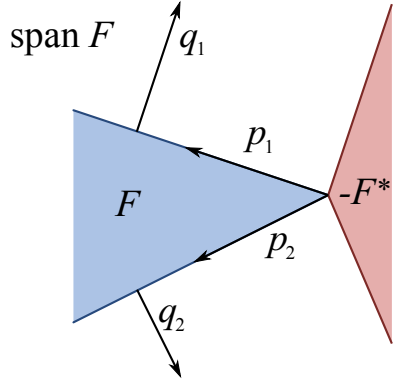}
\caption{Illustration to the proof of Theorem~\ref{thm:3Donly}.}
\label{fig:03}
\end{figure}
Since $K$ is facially exposed, there exist $h_1, h_2\in K^*$ such that 
$(-h_i,0)$ exposes $E_i$, $i\in \{1,2\}$. Observe that $h_i\notin F^\perp$ (otherwise $(h_i,0)$ would expose the whole face $F$). Let $q_i: = \Pi_{\lspan F} h_i$, and observe that $q_i \neq 0$ since $h_i\notin F^\perp$. We have 
$$
\langle q_i, p_i\rangle=\langle h_i, p_i\rangle = 0 \quad \forall i\in \{1,2\};\qquad \langle q_i, p_j\rangle =\langle h_i,p_j\rangle >0 \quad \forall i\neq j. 
$$
This yields (see Fig.~\ref{fig:03})
\begin{equation}\label{eq:006}
F^* =  \cone\{q_1,q_2\}+F^\perp.
\end{equation} 
Observing that $\Pi_{\lspan F} \cone \{h_1,h_2\} = \cone\{ q_1, q_2\}$, from Lemma~\ref{lem:SumSubspace} we have 
\begin{equation}\label{eq:007}
\cone\{q_1,q_2\}+F^\perp = \cone\{h_1,h_2\}+F^\perp.
\end{equation}
Finally, \eqref{eq:006} and \eqref{eq:007} yield $F^* = \cone\{h_1,h_2\}+F^\perp \subset K^*+F^\perp $. Since $F^* = \overline{(K^*+F^\perp)}$ (see \cite[Remark~1]{PatakiFExpNice}) this yields  $K^*+F^\perp = F^*$, hence, $K^*+F^\perp$ is closed. By the arbitrariness of $F$ it follows that the cone $K$ is nice.
\end{proof}

\section{Technical results}\label{sec:tech}

We next prove two fairly trivial results that establish relations between the faces of a closed convex set $C\subset \R^n$ and its homogenization $K =\cone( \{1\}\times C)\subset \R^{n+1}$. 

\begin{proposition}\label{prop:FacesKfromC} Let $C\subset \R^{n}$ be a nonempty compact convex set, and let 
\begin{equation}\label{eq:025}
K = \cone(\{1\}\times C)\subset \R^{n+1}.
\end{equation}
Then $K$ is a closed convex cone and the only faces of $K$ are $\{0_{n+1}\}$ and the sets
$$
F_K = \cone \{\{1\}\times F_C\},\quad F_C\unlhd C.
$$ 
\end{proposition}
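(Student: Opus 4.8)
The plan is to prove the statement in two parts: first that $K = \cone(\{1\}\times C)$ is a closed convex cone, and second that its faces are exactly $\{0_{n+1}\}$ together with the cones $F_K = \cone(\{1\}\times F_C)$ for $F_C \unlhd C$. Since $C$ is compact and convex, the set $\{1\}\times C$ is a compact convex set not containing the origin, so its conic hull is closed; I would verify closedness directly by noting that any point of $K$ has the form $\lambda(1,c)$ with $\lambda\geq 0$ and $c\in C$, and that a convergent sequence of such points forces the scalars $\lambda_k$ (read off from the first coordinate) to converge, after which compactness of $C$ gives the limit in $K$. Convexity is immediate since $K$ is a conic hull of a convex set lying in the hyperplane $\{x_0 = 1\}$.

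\smallskip
For the characterization of faces, I would first check that each proposed set is indeed a face. The cone $\{0_{n+1}\}$ is always a face. For $F_K = \cone(\{1\}\times F_C)$ with $F_C \unlhd C$, I would use the additive characterization of faces of a cone: if $x+y \in F_K$ with $x,y\in K$, write $x = \lambda(1,a)$, $y=\mu(1,b)$ with $a,b\in C$. The sum lies in $F_K$ means $(\lambda+\mu)^{-1}(\lambda a + \mu b)\in F_C$ (when $\lambda+\mu>0$), and since this is a convex combination of $a,b\in C$, the face property of $F_C$ forces $a,b\in F_C$, whence $x,y\in F_K$. The degenerate case $\lambda+\mu=0$ gives $x=y=0\in F_K$ trivially.

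\smallskip
The converse direction — that every face of $K$ arises this way — is the main obstacle and deserves the most care. Given a nonzero face $G \unlhd K$, I would set $F_C := \{c\in C \mid (1,c)\in G\}$ and aim to show $G = \cone(\{1\}\times F_C)$ and $F_C \unlhd C$. The key point is that since $G$ is a cone, it is determined by its intersection with the hyperplane $\{x_0=1\}$: every nonzero element of $K$ is a positive multiple of a unique point $(1,c)$, and because faces of cones are closed under positive scaling, $\lambda(1,c)\in G$ for some $\lambda>0$ iff $(1,c)\in G$ iff $c\in F_C$. This identifies $G$ with $\cone(\{1\}\times F_C)$. To see $F_C\unlhd C$, I would take $c = \alpha a + (1-\alpha) b$ with $a,b\in C$, $\alpha\in(0,1)$, and $c\in F_C$; then $(1,c) = \alpha(1,a)+(1-\alpha)(1,b)$ exhibits $(1,c)\in G$ as a convex combination of points of $K$, so the face property of $G$ (in its convex, rather than conic, form) forces $(1,a),(1,b)\in G$, i.e. $a,b\in F_C$.

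\smallskip
I expect the subtle part to be handling the interaction between the conic face property and the affine slice cleanly — in particular confirming that a nonzero face $G$ cannot contain a ray direction $(1,c)$ only ``partially,'' and that the bookkeeping between the scalar multiples and the base points $c\in C$ is consistent. Compactness of $C$ is what guarantees every nonzero element of $K$ has strictly positive first coordinate (so the slice parametrization is well-defined and the recession cone of $K$ is trivial), and I would make sure to invoke it at the point where closedness and the clean ray structure are used.
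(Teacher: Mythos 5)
Your proposal is correct and follows essentially the same route as the paper: verify the forward inclusion via the additive characterization of faces of a cone after normalizing by the first coordinate, and recover $F_C$ from a face $G$ as the slice $\{c \mid (1,c)\in G\}$, using the convex-combination face property of $G$ to show $F_C\unlhd C$. The only cosmetic difference is that you prove closedness of $K$ by a direct sequential argument (using boundedness of $C$ to rule out a nontrivial recession direction), whereas the paper cites a standard lemma on the closure of the homogenization; both hinge on the same fact that $\rec C=\{0\}$.
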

\begin{proof} First of all, observe that \eqref{eq:025} yields the following two relations that we use throughout the proof:
$$
K = \bigcup_{\lambda \geq 0, x\in C} \lambda(1,x); \qquad C= \{x\,|\, (1,x)\in K\}.
$$
Observe that $K$ is a convex cone by definition. From \cite[Lemma~5.41]{Guler} we have $\overline{K}  = \{\lambda (1,x)\, |\, x\in C,\lambda>0\}\cup \{(0,d)\,|\, d\in \rec C\} $. Since $C$ is compact, its recession cone is trivial, hence, $\overline{K} = K+\{0\} = K$, i.e. $K$ is closed.  

For every $x=(x_0,\bar x)\in K$ (where $x_0\in \R$, $\bar x\in \R^n$) we have 
$x_0 \geq 0$, and $x_0 = 0$ yields $x=0_{n+1}$. We will use this observation several times in the proof.

Let $F_C \unlhd C$ and $ F_K := \cone \{\{1\}\times F_C\}$.
We show that $F_K \unlhd K$. Pick a $z=(z_0,\bar z)\in F_K$, and let $x=(x_0,\bar x),y=(y_0, \bar y)\in K$ be such that $z=x+y$. If $x_0=0$, we have $x=0$, hence, $z=y\in F_K$. Similarly, $y_0=0$ yields $z=x\in F_K$. Assume that both $x_0$ and $y_0$ are not zero. Then $x_0,y_0,z_0>0$. 
Let 
$$
\bar z' = \frac{1}{z_0}\bar z, \quad \bar x' = \frac{1}{x_0}\bar x, \quad \bar y'= \frac{1}{y_0}\bar y.
$$
Observe that $\bar x',\bar y'\in C$ by the definition of $K$; moreover, $\bar z'\in F_C$ and
$$
\bar z' = \frac{x_0}{z_0} \bar x' +\frac{y_0}{z_0} \bar y', \quad \frac{x_0}{z_0}+\frac{y_0}{z_0}=1,\quad  \frac{x_0}{z_0},\frac{y_0}{z_0}>0,
$$
hence, $\bar z'$ is a convex combination of $\bar x'$ and $\bar y'$. Since $\bar z'\in F_C$, and $F_C\unlhd C$, this yields $\bar x',\bar y'\in F_C$, hence, $x,y\in F_K$, and by the arbitrariness of $x,y,z$ 
the set $F_K$ is a face of $K$. It is not difficult to observe that $\{0_{n+1}\}$ is a face of $K$ as $\{x=(x_0,\bar x)\,|\, x_0=0\}\cap K = \{0_{n+1}\}$.

Now assume $F_K$ is a face of $K$. Let 
$$
F_C := \{x\,|\, (1,x )\in F_K \}.
$$
In the case when $F_C = \emptyset$, for every $x\in F_K$ we have $x_0 = 0$, hence, $x=0$, and therefore $F_K = \{0_{n+1}\}$. Consider the case when $F_C\neq \emptyset$. Then $F_K = \cone \{\{1\}\times F_C\}$, and it remains to show that $F_C$ is a face of $C$. Pick any point $\bar z'\in F_C$, and let $\bar z'=\alpha \bar x'+(1-\alpha)\bar y'$, where $\bar x',\bar y'\in C$ and $\alpha \in (0,1)$. Since $\bar z'\in F_C$, the point $z = (1, \bar z')\in F_K$. Let $x = \alpha (1,\bar x')$, $y = (1-\alpha) (1, \bar y')$. Observe that $z = x+y$. Since $F_K$ is a face, 
$x,y\in F_K$, therefore, $\bar x',\bar y'\in F_C$, and hence $F_C$ is a face of $C$.
\end{proof}

\begin{proposition}\label{prop:IfCFExpKFExp} Let $C$ be a compact convex set in $\R^n$. If $C$ is facially exposed, then so is $K = \cone \{\{1\}\times C\}\subset \R^{n+1}$.
\end{proposition}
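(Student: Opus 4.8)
The plan is to leverage the complete description of the faces of $K$ furnished by Proposition~\ref{prop:FacesKfromC}: every face of $K$ is either the trivial face $\{0_{n+1}\}$, the whole cone $K$, or of the form $F_K = \cone\{\{1\}\times F_C\}$ for some face $F_C\unlhd C$. Since facial exposedness must be verified face by face, I would treat these three cases separately, in each exhibiting an explicit pair $(y,d)$ satisfying \eqref{eq:defExposed}. Recalling that for a cone every exposing pair has $d=0$, it suffices in each case to produce the functional $y$.

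The two trivial faces are immediate. The cone $K$ is exposed by $(0_{n+1},0)$, as holds for any convex set. For the zero face, I would use the observation recorded in the proof of Proposition~\ref{prop:FacesKfromC} that every $x=(x_0,\bar x)\in K$ has $x_0\geq 0$, with $x_0=0$ forcing $x=0_{n+1}$; hence $y=(-1,0_n)$ gives $\langle y,x\rangle=-x_0\leq 0$ on $K$, with equality exactly at the origin, so $(y,0)$ exposes $\{0_{n+1}\}$.

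The substantive case is $F_K=\cone\{\{1\}\times F_C\}$. Here I would invoke the facial exposedness of $C$ to obtain a pair $(a,b)\in\R^{n+1}$ with $\langle a,x\rangle\leq b$ for all $x\in C$ and $\langle a,x\rangle=b$ iff $x\in F_C$. The idea is to lift $(a,b)$ to the functional $y=(-b,a)$ on $\R^{n+1}$. A generic point of $K$ is $\lambda(1,x)$ with $\lambda\geq 0$ and $x\in C$, and for such a point $\langle y,\lambda(1,x)\rangle=\lambda(\langle a,x\rangle-b)\leq 0$, since $\lambda\geq 0$ and $\langle a,x\rangle\leq b$; thus $(y,0)$ is a supporting pair for $K$. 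It then remains to check the equality clause: $\lambda(\langle a,x\rangle-b)=0$ holds iff $\lambda=0$ (the origin, which lies in $F_K$) or $\langle a,x\rangle=b$ (i.e. $x\in F_C$, so $\lambda(1,x)\in F_K$). Hence the zero set of $y$ on $K$ is exactly $F_K$, and $(y,0)$ exposes $F_K$.

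I do not anticipate any serious obstacle: once the faces of $K$ are known from Proposition~\ref{prop:FacesKfromC}, the construction is a direct lifting of the exposing data of $C$, and the only care needed is in confirming the ``iff'' clause and tracking the degenerate instances $\lambda=0$ and $F_C=C$. Indeed, the choice $F_C=C$ corresponds to $(a,b)=(0_n,0)$ and recovers the exposing pair for $K$ itself, so the last case in fact subsumes the first of the trivial ones.
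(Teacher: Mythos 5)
Your proposal is correct and matches the paper's proof in all essentials: both reduce to the face classification of Proposition~\ref{prop:FacesKfromC}, expose the trivial face $\{0_{n+1}\}$ via the hyperplane $x_0=0$, and lift an exposing pair $(\bar y,d)$ of $F_C$ to the functional $(-d,\bar y)$ on $\R^{n+1}$, checking the sign of $x_0(\langle\bar y,\bar x'\rangle-d)$ exactly as you do. No further comment is needed.
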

\begin{proof} 
First observe that $\{0_{n+1}\}$ is an exposed face of $K$, since $\{x=(x_0,\bar x)\,|\, x_0=0\}\cap K = \{0_{n+1}\}$. By  Proposition~\ref{prop:FacesKfromC} the only remaining faces of $K$ are 
$$
F_K = \cone \{\{1\}\times F_C\}, \quad F_C \unlhd C.
$$
Assume that $F_C \unlhd C $. Since $F_C$ is exposed, there exist $\bar y\in \R^{n}$ and $d\in \R$ such that 
\begin{equation}\label{eq:008}
\langle \bar x, \bar y \rangle < d \quad \forall\,\bar x\in C\setminus F_C; 
\qquad 
\langle \bar x, \bar y \rangle = d \quad \forall\,\bar x\in F_C. 
\end{equation}
Let $y := (-d,\bar y) \in \R^{n+1}$. Pick any $ x=(x_0,\bar x)\in F_K$. If $x_0 = 0$, we have $\langle x,y \rangle = 0 $. If $x_0>0$, observe that 
$\bar x' = \tfrac{1}{x_0}\bar x \in F_C$, hence, we have
$$
\langle x, y\rangle = x_0\left(-d+\langle \bar y,\bar x'\rangle\right)= x_0 (-d+d)= 0. 
$$
Now suppose $x\in K\setminus F_K$. Then $x_0\neq 0$ and  $\bar x':= \frac{1}{x_0}\bar x\notin F_C$. We have from \eqref{eq:008}
$$
\langle x, y\rangle = x_0\left(-d+\langle \bar y,\bar x'\rangle\right)< x_0 (-d+d)= 0. 
$$
We have therefore shown that the pair $((-d,y),0)$ exposes $F_K \unlhd K$.
\end{proof}

The following statement relates the polar sets of $C\subset \R^n$ and $K=\cone(\{1\}\times C)\subset \R^{n+1}$.

\begin{proposition}\label{prop:PolarCone} Let $C\subset \R^n$ be a compact convex set such that $0_n\in \interior C$, and let $K = \cone(\{1\}\times C)$. Then
$$
K^\circ = \cone \{\{-1\}\times C^\circ\}.
$$
\end{proposition}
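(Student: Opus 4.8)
The plan is to unravel both sides directly from the definitions of the polar sets and compare them, organising the inclusion-by-inclusion argument around the sign of the first coordinate. First I would record the characterisation of $K^\circ$ forced by the conic structure of $K$: since every element of $K$ has the form $\lambda(1,x)$ with $\lambda\geq 0$ and $x\in C$, a vector $y=(y_0,\bar y)\in\R^{n+1}$ lies in $K^\circ$ if and only if $\lambda(y_0+\langle \bar y,x\rangle)\leq 0$ for all $\lambda\geq 0$ and $x\in C$, which by homogeneity in $\lambda$ reduces to the single inequality
$$
\sup_{x\in C}\langle \bar y,x\rangle \leq -y_0.
$$
This inequality is the workhorse of the whole argument.

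For the inclusion $K^\circ\subseteq\cone\{\{-1\}\times C^\circ\}$ I would split into three cases according to the sign of $y_0$. When $y_0>0$, evaluating the inequality at $0_n\in C$ gives $0\leq -y_0<0$, a contradiction, so this case is vacuous. When $y_0<0$, setting $\mu:=-y_0>0$ the inequality rescales to $\sup_{x\in C}\langle \bar y/\mu,x\rangle\leq 1$, that is $\bar y/\mu\in C^\circ$, and then $y=\mu(-1,\bar y/\mu)$ exhibits $y$ as an element of the cone on the right. The case $y_0=0$ is where the hypothesis $0_n\in\interior C$ is genuinely used: the inequality becomes $\sup_{x\in C}\langle \bar y,x\rangle\leq 0$, and since a ball around $0_n$ lies in $C$ this forces $\bar y=0_n$, so $y=0_{n+1}$, which belongs to the cone (take $\mu=0$).

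The reverse inclusion $\cone\{\{-1\}\times C^\circ\}\subseteq K^\circ$ is a short direct check: any such element has the form $\mu(-1,z)=(-\mu,\mu z)$ with $\mu\geq 0$ and $z\in C^\circ$, and substituting $y_0=-\mu$, $\bar y=\mu z$ into the displayed characterisation yields $\sup_{x\in C}\langle \mu z,x\rangle=\mu\sup_{x\in C}\langle z,x\rangle\leq \mu=-y_0$, where the bound $\sup_{x\in C}\langle z,x\rangle\leq 1$ is precisely the defining property of $C^\circ$.

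The main obstacle here is conceptual rather than computational: I expect the only subtle point to be pinning down exactly where $0_n\in\interior C$, as opposed to merely $0_n\in C$, is indispensable. Membership $0_n\in C$ alone suffices to kill the $y_0>0$ case, but it is the interior hypothesis that eliminates nonzero boundary normals in the $y_0=0$ case; without it, a supporting functional of $C$ at $0_n$ would produce a point of $K^\circ$ of the form $(0,\bar y)$ with $\bar y\neq 0_n$ having no preimage on the right-hand side, and the identity would fail. Everything else is routine rescaling.
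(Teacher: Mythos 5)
Your proof is correct and follows essentially the same route as the paper: both reduce membership in $K^\circ$ to the single inequality $\sup_{x\in C}\langle \bar y,\bar x\rangle\leq -y_0$ and then use $0_n\in\interior C$ to force $y_0\leq 0$ and rescale to land in $\cone\{\{-1\}\times C^\circ\}$. If anything, your explicit three-way case split on the sign of $y_0$ is slightly more careful than the paper's treatment of the $y_0=0$ and $\bar y=0_n$ edge cases.
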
 
\begin{proof}
From the definition of a polar cone 
\begin{align*}
K^\circ & = \{(y_0, \bar y)\,|\, \sup_{\alpha\geq 0,\, \bar x\in C}\alpha (y_0+\langle \bar x, \bar y\rangle)\leq 0\}\\
 & = \{(y_0, \bar y)\,|\, \sup_{\alpha> 0,\, \bar x\in C}\alpha (y_0+ \langle \bar x, \bar y\rangle)\leq 0\}\\
 & = \{(y_0, \bar y)\,|\, \sup_{\bar x\in C}(y_0+ \langle \bar x, \bar y\rangle)\leq 0\}\\
 & = \{(y_0, \bar y)\,|\, \sup_{\bar x\in C}\langle \bar x, \bar y\rangle\leq -y_0\}.
\end{align*}
Observe that since $0_n \in \interior C$, for every $y\in \R^n $  we have $\sup_{x\in C}\langle x, y\rangle >0$, hence, 
\begin{align*}
K^\circ & = \{\alpha(-1, \bar y), \alpha \geq  0\,|\, \alpha\sup_{\bar x\in C}\langle  \bar x, \bar y\rangle\leq \alpha\}\\
& = \{0_{n+1}\}\cup \{\alpha(-1, \bar y), \alpha >  0\,|\, \sup_{\bar x\in C}\langle  \bar x, \bar y\rangle\leq 1\}\\
& = \{0_{n+1}\}\cup \{\alpha(-1, \bar y), \alpha >  0\,|\, \bar y\in C^{\circ}\}\\
& = \cone \{\{-1\}\times C^\circ\}.
\end{align*}
\end{proof}

\section{Four-dimensional Counterexample}\label{sec:4D}

The goal of this section is to prove the following result

\begin{theorem}\label{thm:counterex} There exists a facially exposed closed convex cone $K\subset \R^4$ such that $K$ is not nice 
\end{theorem}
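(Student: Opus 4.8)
The plan is to reduce the four-dimensional problem to the construction of a suitable three-dimensional convex body via the homogenization machinery of Section~\ref{sec:tech}. Concretely, I would exhibit a compact convex set $C \subset \R^3$ with $0_3 \in \interior C$ that is facially exposed, and set $K = \cone(\{1\} \times C) \subset \R^4$. By Proposition~\ref{prop:IfCFExpKFExp} the cone $K$ is then automatically facially exposed, so facial exposedness of the counterexample comes for free. The entire burden therefore falls on arranging that $K$ fails to be nice: there must exist a face $F \unlhd K$ for which $K^* + F^\perp$ is not closed.

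To certify non-niceness I would first make $K^*$ fully explicit. Since $K^* = -K^\circ$, Proposition~\ref{prop:PolarCone} gives $K^* = \cone(\{1\} \times (-C^\circ))$, expressing the dual of $K$ through the polar body $C^\circ$. Proposition~\ref{prop:FacesKfromC} identifies the candidate faces: every nontrivial $F \unlhd K$ has the form $F = \cone(\{1\} \times F_C)$ for a face $F_C \unlhd C$, so I would search among the low-dimensional faces $F_C$ of $C$ (an edge or a vertex) for one whose associated $F$ breaks closedness. Recalling that $\overline{K^* + F^\perp} = F^*$, it suffices to produce a single point of $F^*$ that does not belong to $K^* + F^\perp$, equivalently to exhibit a sequence in $K^* + F^\perp$ whose limit escapes the set.

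The geometric heart of the matter is the choice of $C$. The idea is to build a body that is facially exposed --- every face cut out by a genuine supporting hyperplane --- yet whose boundary carries a one-dimensional face $F_C$ sitting at a \emph{transition} between a flat portion and a strictly curved portion, tuned so that the supporting hyperplanes along $F_C$ degenerate in a controlled way. Dualizing, this degeneracy forces the summand $F^\perp$ to compensate for a direction that the dual body $-C^\circ$ only approaches asymptotically, producing a sequence $k_n + \ell_n \in K^* + F^\perp$ (with $k_n \in K^*$, $\ell_n \in F^\perp$) that converges to a point of $F^*$ attainable only in the limit. Writing the polar $C^\circ$ explicitly in coordinates, I would pin down both this boundary point and the escaping sequence by direct computation.

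The main obstacle is the simultaneous control of two opposing requirements. Facial exposedness must hold for \emph{every} face of $C$ --- its vertices, edges, and two-dimensional facets --- which forbids the usual quick route to non-niceness (a non-exposed face). One must therefore engineer the pathology entirely within the exposed regime, where it first becomes possible in dimension four, and then carry out the explicit polar computation together with the limiting argument certifying $K^* + F^\perp \subsetneq F^*$. Verifying that the constructed sequence genuinely lies in $K^* + F^\perp$ while its limit does not is the delicate step, and it is precisely here that the three-dimensional rigidity exploited in Theorem~\ref{thm:3Donly} breaks down.
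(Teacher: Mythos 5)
Your strategy coincides exactly with the paper's: build a facially exposed compact convex body $C\subset\R^3$, homogenize to $K=\cone(\{1\}\times C)\subset\R^4$, obtain facial exposedness of $K$ from Proposition~\ref{prop:IfCFExpKFExp}, and certify non-niceness by producing a point of $F^\circ=\overline{K^\circ+F^\perp}$ that lies outside $K^\circ+F^\perp$ for a suitable face $F$. However, what you have written is a roadmap rather than a proof, and the gap is precisely where all the content of the theorem lives: you never exhibit the set $C$. Every substantive difficulty --- choosing $C$ so that \emph{all} of its faces (a continuum of vertices, two one-parameter families of edges, plus the flat facets) are exposed, identifying the face $F$ that breaks closedness, and carrying out the limiting computation --- is deferred with ``I would exhibit'' or ``I would pin down by direct computation.'' In the paper this is the bulk of the work: $C$ is the convex hull of four circular arcs, Proposition~\ref{prop:FacesOfC} enumerates every face and constructs an explicit exposing hyperplane for each (this is delicate, since the exposing normals $y_\theta$ for the edges $F_{11}(\theta)$ require solving for the coupling $t_\theta$ between points on $\gamma_1$ and $\gamma_3$), and Proposition~\ref{prop:KNotNice} verifies by a Taylor-type estimate (Proposition~\ref{prop:Technical}) that the explicit point $q=(-1,0,-1,2)$ cannot be written as $p+\lambda u$ with $p\in K^\circ$.

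Two further points. First, your intuition that the offending face $F_C$ should be ``an edge or a vertex'' of $C$ is off: in the paper the relevant face is the \emph{two-dimensional} facet $\co\{\gamma_3,\gamma_4\}$ (so $F\unlhd K$ is three-dimensional and $F^\perp$ is a line); the pathology arises because the curved arc $\gamma_1$ meets the plane of that facet tangentially at $p_0$, so the needed dual direction is approached only asymptotically. Second, your plan to compute $K^*$ explicitly via Proposition~\ref{prop:PolarCone} and write out $C^\circ$ in coordinates is not how the paper proceeds (and would be quite painful for this $C$); instead the paper avoids describing $K^\circ$ globally and argues by contradiction, testing a putative decomposition $q=p+\lambda u$ against the single curve $t\mapsto(1,2\gamma_1(t)+c)\subset K$. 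Without the explicit body and these verifications, the proposal does not establish the theorem.
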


We first describe the construction of our counterexample. Let the three-dimensional curves $\gamma_i:[0,T]\to \R^3$, with $T = \pi/4$, $i\in I := \{1,2,3,4\}$ be defined as follows:
\begin{align}\label{eq:curves}
\gamma_1 (t) & := \left(0, - \sin t,  \cos t -1\right); 
&
\gamma_2 (t) & := \left(0, \cos t-1, - \sin t \right);\notag\\
\gamma_3 (t) & := \left(-\sin t, 1- \cos t,  0 \right); 
& 
\gamma_4 (t) & := \left(\cos t -1, \sin t, 0\right).
\end{align}
For convenience, we use $\gamma_i$ to denote the whole segment $\gamma_i([0,T])$. Let 
$ C: = \co_{i\in I} \{\gamma_i\}$.

The three-dimensional set $C$ is shown in Fig.~\ref{fig:00}, and in Fig.~\ref{fig:02} we give some geometric details related to the construction of $C$.
\begin{figure}[ht]
\centering
\includegraphics[scale=0.7]{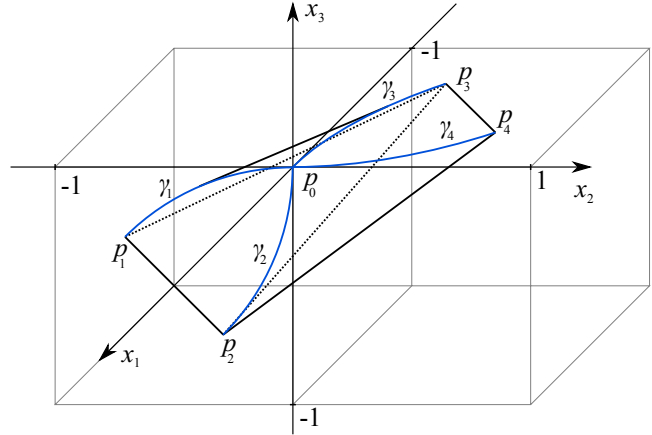}
\caption{The three-dimensional set $C = \co \{\gamma_1, \gamma_2, \gamma_3, \gamma_4\}$.}
\label{fig:00}
\end{figure}
\begin{figure}[ht]
\centering
\includegraphics[scale=.4]{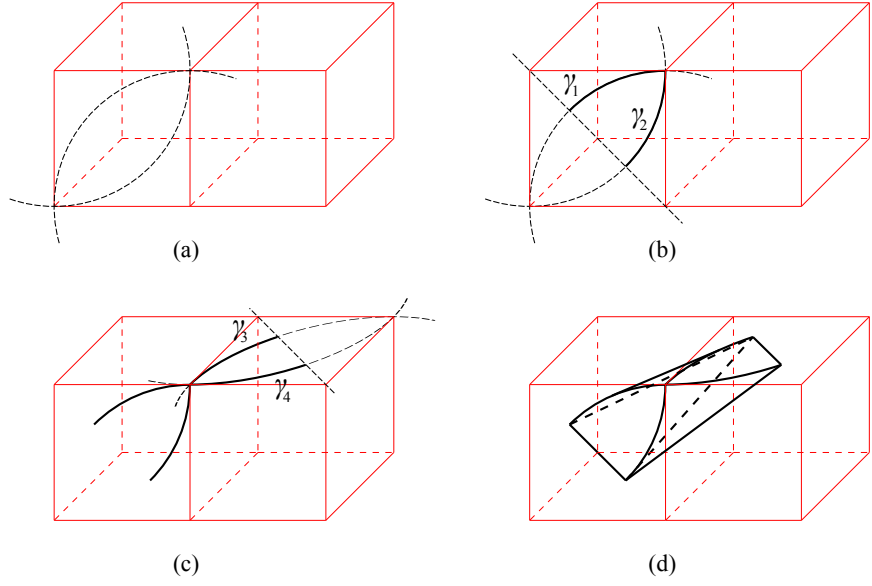}
\caption{The geometric construction of the set $C$: (a) consider two adjacent cubes as shown; 
on the frontal surface of the left cube draw two circles with centres at the cube's top-left and bottom-right corners with the radius equal to the 
edge; (b) intersect the circles with the diagonal through their centres; $\gamma_1$ and $\gamma_2$ are the segments of the circles bounded by their intersections with the diagonal and the top-right corner of the face; (c) repeat the same process on the top face of the right cube to obtain the curves $\gamma_3$ and $\gamma_4$; (d) construct the convex hull.}
\label{fig:02}
\end{figure}
To finish the construction of the counterexample, let 
\begin{equation}\label{eq:DefK}
C' : = 2C+c   \quad \text{ with } \quad c = \left(\tfrac{1}{2},0,\tfrac{1}{2}\right)\quad \text{and} \quad K  = \cone (\{1\}\times C'\}.
\end{equation}
The set $C'$ is shown in Fig.~\ref{fig:01}.
\begin{figure}[ht]
\centering
\includegraphics[scale=0.7]{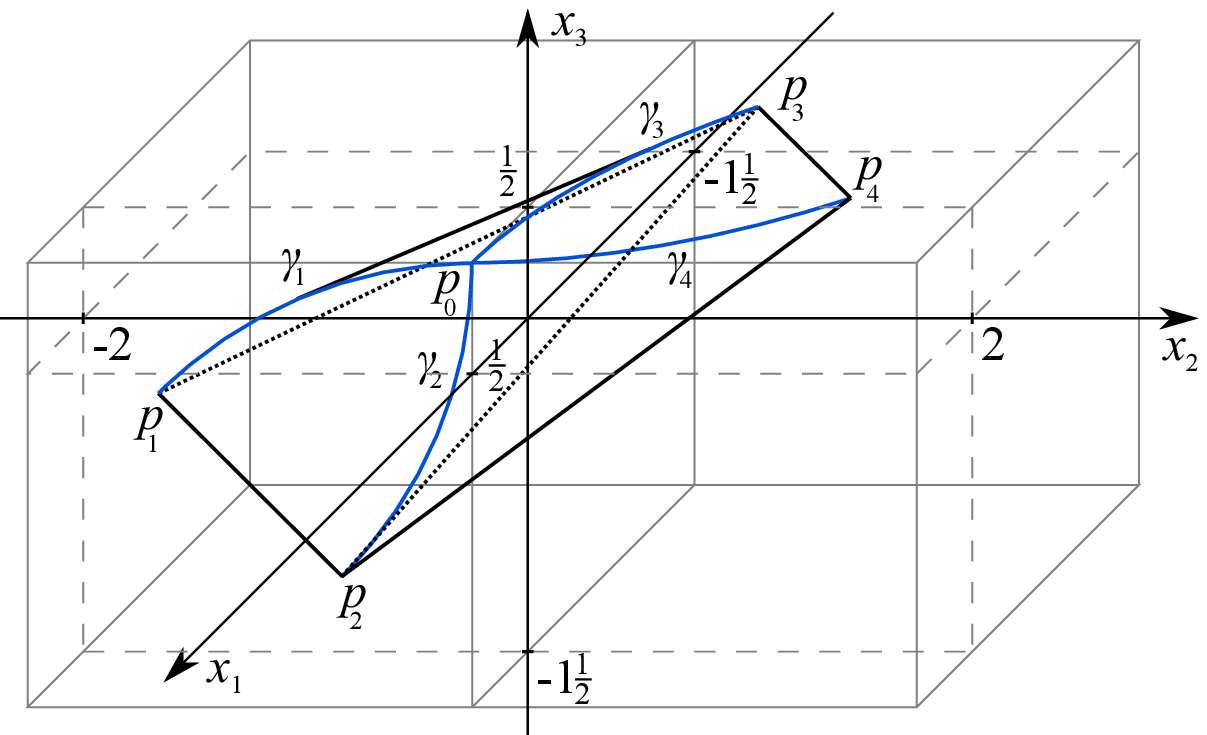}
\caption{The three-dimensional set $C' = 2 C+\{c\}$.}
\label{fig:01}
\end{figure}
We prove Theorem~\ref{thm:counterex} by showing that this set is a convex closed facially exposed cone which is not nice. Throughout this section, we always use the notation $C$, $C'$ and $K$ to refer to the aforementioned sets. 

For the proof of Theorem~\ref{thm:counterex} we need several technical statements related to the geometry of the sets $C$, $C'$ and $K$. 

For convenience, denote the endpoints of the curves in \eqref{eq:curves} as follows.
\begin{eqnarray}\label{eq:defp}
 &  p_0:  =0_3  = \gamma_1(0) = \gamma_2(0)= \gamma_3(0) = \gamma_4(0);\notag\\
 &  p_1:  =\left(0, -\tfrac{1}{\sqrt{2}}, \tfrac{1}{\sqrt{2}}-1\right)  = \gamma_1(T);\qquad 
 p_2:  =\left(0, \tfrac{1}{\sqrt{2}}-1, -\tfrac{1}{\sqrt{2}}\right)  = \gamma_2(T);\\
 &  p_3:  =\left(-\tfrac{1}{\sqrt{2}}, 1-\tfrac{1}{\sqrt{2}},0\right)  = \gamma_3(T);\qquad
 p_4:  =\left(\tfrac{1}{\sqrt{2}}-1,\tfrac{1}{\sqrt{2}},0\right)  = \gamma_4(T).\notag
\end{eqnarray}
Throughout this section, we also utilize the following notation.
\begin{align}\label{eq:DefThetaThingies}
t_\theta &:=  \arccos\left(\frac{\sin \theta}{1+\sin \theta - \cos \theta}\right);\notag\\
y_\theta &:=  (-\sin t_\theta \sin \theta, - \cos t_\theta \sin \theta, \cos t_\theta \cos \theta);\\
d_\theta & : = \cos t_\theta(1-\cos \theta)=\sin \theta (1-\cos t_\theta).
\end{align}

\begin{proposition}\label{prop:TTheta} Let 
$$
\varphi (\theta) : = \frac{\sin \theta}{1+\sin \theta - \cos \theta}, \quad \theta \in (0,T].
$$
The function $\varphi: (0,T]\to \R$ is strictly decreasing; moreover, 
$$
\lim_{\theta \downarrow 0} \varphi (\theta)  = 1; \qquad \varphi(T) = \frac{1}{\sqrt{2}}.
$$
Hence, the mapping $t_\theta :(0,T]\to (0,T]$, $t_\theta  = \arccos \varphi(\theta)$ is a bijection.
\end{proposition}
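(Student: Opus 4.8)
The plan is to treat $\varphi$ as an elementary function of one real variable, read off all three claimed properties directly, and then combine them with the standard behaviour of $\arccos$ to obtain the bijection. The only substantive step is the monotonicity, which I would establish by differentiation; the limit and the endpoint value are immediate evaluations.

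First I would check that $\varphi$ is well defined and differentiable on $(0,T]$ by confirming that its denominator does not vanish: for $\theta\in(0,T]$ we have $\sin\theta>0$ and $\cos\theta<1$, so $1+\sin\theta-\cos\theta = \sin\theta + (1-\cos\theta) > 0$. Differentiating by the quotient rule with numerator $\sin\theta$ and denominator $1+\sin\theta-\cos\theta$, the expression $\cos\theta(1+\sin\theta-\cos\theta)-\sin\theta(\cos\theta+\sin\theta)$ collapses, after using $\sin^2\theta+\cos^2\theta=1$, and one obtains
$$
\varphi'(\theta) = \frac{\cos\theta-1}{(1+\sin\theta-\cos\theta)^2}.
$$
On $(0,T]$ the numerator $\cos\theta-1$ is strictly negative and the denominator is a positive square, so $\varphi'(\theta)<0$ throughout; hence $\varphi$ is strictly decreasing.

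The remaining two properties are direct. For the endpoint, $\sin T=\cos T=\tfrac{1}{\sqrt2}$ gives $\varphi(T)=(\tfrac{1}{\sqrt2})/(1+\tfrac{1}{\sqrt2}-\tfrac{1}{\sqrt2})=\tfrac{1}{\sqrt2}$. For the limit at $0$, I would cancel a single power of $\theta$ from numerator and denominator (or apply l'Hôpital once), using $\sin\theta=\theta+O(\theta^3)$ and $1-\cos\theta=O(\theta^2)$, so that $\varphi(\theta)=(\theta+O(\theta^3))/(\theta+O(\theta^2))\to 1$ as $\theta\downarrow 0$.

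Finally I would assemble the bijection claim. Being continuous and strictly decreasing on $(0,T]$ with value $\tfrac{1}{\sqrt2}$ at $T$ and right-limit $1$ at $0$, the function $\varphi$ maps $(0,T]$ onto the half-open interval $[\tfrac{1}{\sqrt2},1)$. Since $\arccos$ is continuous and strictly decreasing and sends $[\tfrac{1}{\sqrt2},1)$ exactly onto $(\arccos 1,\arccos(\tfrac{1}{\sqrt2})]=(0,\pi/4]=(0,T]$, the composition $t_\theta=\arccos\varphi(\theta)$ is a continuous, strictly increasing surjection from $(0,T]$ onto $(0,T]$, and therefore a bijection. I do not expect any real obstacle: the computation of $\varphi'$ is the only place requiring care, and the sole subtlety in the conclusion is to track the half-open endpoints so that the image intervals match up correctly.
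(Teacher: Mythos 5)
Your proposal is correct and follows essentially the same route as the paper: the same quotient-rule computation giving $\varphi'(\theta)=(\cos\theta-1)/(1+\sin\theta-\cos\theta)^2<0$, the same endpoint evaluations (the paper also uses l'H\^opital for the limit at $0$), and the same conclusion that strict monotonicity plus the matching of $\cos 0$ and $\cos T$ at the endpoints makes $t_\theta$ a bijection. Your explicit tracking of the half-open image intervals is a slightly more careful write-up of the paper's final sentence, but it is not a different argument.
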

\begin{proof}
Observe that 
$$
\varphi'(\theta) = \frac{\cos \theta -1}{(1+\sin \theta -\cos \theta)^2}<0 \quad \forall \, \theta \in (0,T],
$$
hence, $\varphi$ is strictly decreasing on $(0, T]$; using l'H\^opital's rule, we have
\begin{equation}\label{eq:012}
\lim_{\theta\downarrow 0}\varphi(\theta) = \frac{\cos \theta}{\cos\theta+\sin \theta} = 1 =\cos (0); \quad \varphi(T) = \frac{1}{\sqrt{2}}= \cos T.
\end{equation}
It is evident from the strict monotonicity of $\varphi$ and \eqref{eq:012} that $t_\theta$ is bijective.
\end{proof}

In the next statement we list all proper faces of the set $C$ and show that they are exposed. The {\em dimension} of a face is the dimension of the smallest affine subspace that contains this face.
\begin{proposition}\label{prop:FacesOfC}
The following singletons are the only zero-dimensional faces of $C$:
$$
F_{00} = \{p_0\};\quad  F_{0i}(t) = \{\gamma_i(t)\},\quad  t\in (0,T],\; i\in I.
$$
The only one-dimensional faces of $C$ are the following line segments: 
$$
\begin{array}{l}
F_{11}(\theta) = \co \{\gamma_1(\theta),\gamma_3(t_\theta)\},\\
F_{12}(\theta) = \co \{\gamma_4(\theta), \gamma_2(t_\theta)\},
\end{array}
  \qquad \theta \in (0,T];
$$
$$
F_{13} = \co \{p_1,p_2\};\quad F_{14} = \co \{p_3, p_4\};\quad F_{15} = \co \{p_2, p_3\}.
$$
The only two-dimensional faces of $C$ are 
$$
F_{21} = \co \{p_1,p_2, p_3\},\quad  F_{22} = \co \{p_2,p_3, p_4\},\quad F_{23} = \co \{\gamma_1, \gamma_2\}, \quad 
F_{24} = \co \{\gamma_3, \gamma_4\}.
$$
All these faces are exposed.
\end{proposition}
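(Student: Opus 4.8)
The plan is to establish the three claims of the proposition in turn: (i) each listed set is a face of $C$; (ii) each is exposed; and (iii) the list is complete. A single orthogonal involution handles most of the bookkeeping. Set $\tau(x,y,z) := (z,-y,x)$; this is an orthogonal map, and direct substitution shows it permutes the generators by $\gamma_1\leftrightarrow\gamma_4$ and $\gamma_2\leftrightarrow\gamma_3$, so $\tau(C)=C$. Being orthogonal, $\tau$ carries faces to faces and exposed faces to exposed faces, and one checks $\tau(F_{11}(\theta))=F_{12}(\theta)$, $\tau(F_{21})=F_{22}$, $\tau(F_{23})=F_{24}$, $\tau(F_{13})=F_{14}$, while $F_{15}$ is $\tau$-invariant. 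Hence it suffices to treat one representative from each orbit.

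For the exposedness claim I would exhibit a supporting hyperplane for each face. Every generator satisfies $x_1\le 0$ with equality exactly along $\gamma_1\cup\gamma_2$, so $((1,0,0),0)$ exposes $F_{23}$, and $\tau$ gives $F_{24}$. For the triangle $F_{21}=\co\{p_1,p_2,p_3\}$ I would take the normal $n=(2\sqrt{2}-1,1,1)$ through $p_1,p_2,p_3$ and verify the global bound $\langle n,x\rangle\ge -1$ on each curve, with equality only at $p_1,p_2,p_3$; this reduces to monotonicity of short trigonometric expressions on $\gamma_1,\gamma_2,\gamma_3$ and a sign check on $\gamma_4$, after which $\tau$ yields $F_{22}$. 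The crucial family is $F_{11}(\theta)$: using the prepared data $(y_\theta,d_\theta)$, a direct computation gives $\langle y_\theta,\gamma_1(\theta)\rangle=\langle y_\theta,\gamma_3(t_\theta)\rangle=d_\theta$ — exactly the identity that the definition of $t_\theta$ is engineered to produce — and it then remains to show $\langle y_\theta,\cdot\rangle\le d_\theta$ on all four curves with equality only at these two points, so that the exposed face is precisely $\co\{\gamma_1(\theta),\gamma_3(t_\theta)\}$; applying $\tau$ delivers $F_{12}(\theta)$. Finally the edges $F_{13}=F_{21}\cap F_{23}$ and $F_{15}=F_{21}\cap F_{22}$ are intersections of two already-exposed facets, hence exposed by any strictly positive combination of the two supporting functionals, with $F_{14}=\tau(F_{13})$; and each curve point is exposed by a functional in the relative interior of the two-dimensional normal cone spanned by the leaf normal and the ruling meeting there.

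For completeness I would argue through extreme points and a partition of $\partial C$. Minkowski's theorem gives $\mathrm{ext}\,C\subseteq\bigcup_i\gamma_i$, and since each $\gamma_i$ is a strictly convex circular arc lying in the boundary of a leaf, every curve point is extreme; this fixes the zero-dimensional faces. Any higher face is the convex hull of the curve points it contains, so it is enough to see that the exposing hyperplanes above exhaust the boundary: the four flat faces together with the two ruled surfaces swept by $F_{11}(\theta)$ and $F_{12}(\theta)$ cover $\partial C$, with Proposition~\ref{prop:TTheta} ensuring that $\theta\mapsto t_\theta$ is a bijection of $(0,T]$, so the chords $F_{11}(\theta)$ fill the region between $\gamma_1$ and $\gamma_3$ without overlap and pinch down to $p_0$ as $\theta\downarrow 0$. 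A relative-interior point of a ruled surface lies on a unique such chord whose exposing functional meets $C$ in that chord alone, so no extra two-dimensional face can hide there; matching each boundary point to its minimal face then shows the list is exactly the one stated.

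The step I expect to be the main obstacle is the global validity of the supporting inequalities, above all for the family $F_{11}(\theta)$: one must verify $\langle y_\theta,\cdot\rangle\le d_\theta$ uniformly in $\theta$ across the transverse curves $\gamma_2$ and $\gamma_4$, not merely along $\gamma_1$ and $\gamma_3$ where equality is built in by construction. This is precisely where the explicit formula for $t_\theta$ and the monotonicity and limit statements of Proposition~\ref{prop:TTheta} become indispensable, and it is the most delicate computation in the argument.
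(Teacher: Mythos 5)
Your proposal is correct and takes essentially the same route as the paper: explicit exposing functionals with the pair $(y_\theta,d_\theta)$ for the ruled family $F_{11}(\theta)$, symmetry to transfer to $F_{12}(\theta)$ and the mirror faces, sums of exposing functionals for the lower-dimensional intersections, and a boundary-covering argument for completeness. The one step you defer — verifying $\langle y_\theta,\gamma_j(t)\rangle\le d_\theta$ globally, in particular on the transverse curves $\gamma_2$ and $\gamma_4$ — is exactly what the paper carries out via the elementary identities \eqref{eq:020a}--\eqref{eq:022}, and it goes through as you anticipate.
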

\begin{proof} It is evident from the plot in Fig.~\ref{fig:00} that the only two-dimensional faces of $F$ are, indeed, $F_{21}$, $F_{22}$, $F_{23}$ and $F_{24}$, and that they are exposed. It is also clear that $F_{1i}$, $i=3,4,5$ are one-dimensional exposed faces of $C$, and that the singletons in $\cup_{i=1}^4\gamma_i$ are the only zero-dimensional faces of $C$. It remains to show that the one-dimensional sets $F_{11}(\theta)$ and $F_{12}(\theta)$ are exposed faces of $C$, that all zero-dimensional faces are exposed and that $C$ does not have any other faces.

Fix $\theta \in (0,T]$. By direct substitution we obtain for all $t$
\begin{equation}
\langle \gamma_1(t),y_\theta\rangle   = \cos t_\theta \left(\cos(t-\theta)-\cos \theta \right),  \quad \langle \gamma_3(t),y_\theta\rangle  = \sin \theta \left(\cos(t-t_\theta)-\cos t_\theta \right) \label{eq:020a}
\end{equation}
Observe that for $t,\theta \in [0,T]$ we have $\cos (t-\theta) <1$ for all $t\neq \theta$, and $\cos (t-\theta) = 1$ when $t=\theta$. Hence, relations \eqref{eq:020a} yield
\begin{equation}\label{eq:020}
\langle \gamma_1(\theta), y_\theta\rangle =\cos t_\theta \left(1-\cos \theta \right) = d_\theta, \quad \langle \gamma_3(t_\theta), y_\theta\rangle = \sin \theta \left(1-\cos t_\theta \right)= d_\theta;
\end{equation}
and 
\begin{equation}\label{eq:021}
\begin{array}{ll}
\langle \gamma_1(t), y_\theta\rangle  < \cos t_\theta(1-\cos \theta) = d_\theta & \forall t\in [0,T]\setminus\{ \theta\},\\
\langle \gamma_3(t), y_\theta\rangle  < \sin \theta(1-\cos t_\theta) = d_\theta & \forall t\in [0,T]\setminus\{ t_\theta\}.
\end{array}
\end{equation}
Also, by direct substitution 
\begin{equation}
\langle \gamma_2(t),y_\theta\rangle   = \cos t_\theta \left(\sin \theta - \sin(t+\theta)\right),  \quad \langle \gamma_4(t),y_\theta\rangle   = \sin\theta \left(\sin t_\theta - \sin(t+t_\theta)\right)\label{eq:020b}
\end{equation}
Observe that for $t,\theta \in [0,T]$ we have $\sin (t+\theta)\geq \sin \theta$, hence,
\begin{equation}\label{eq:022}
\langle \gamma_2(t), y_\theta\rangle \leq 0 < d_\theta, \quad \langle \gamma_4(t), y_\theta\rangle \leq 0 < d_\theta   \quad \forall t\in [0,T].
\end{equation}
From \eqref{eq:020},\eqref{eq:021} and \eqref{eq:022} we deduce that for every $\theta \in (0,T]$ the pair $(y_\theta, d_\theta)$ exposes $F_{11}(\theta)$, but no other points of $C$. Hence, for every $\theta \in (0,T]$ the set  $F_{11}(\theta)$ is an exposed face.  

Similarly, it can be shown that the sets $F_{12}(\theta)$ are exposed faces of $C$. In this case  $y_\theta$ must be replaced by the symmetric normals
$$
y'_\theta = (\cos t_\theta \cos \theta, \sin \theta \cos t_\theta, -\sin t_\theta \sin \theta).
$$

We show that the zero-dimensional faces of $C$ are exposed by demonstrating the exposing planes for each of them. Fix $\theta \in (0,T]$ and consider the face $F_{03}(t_\theta)$ (recall that $t_\theta:[0,T]\to [0,T]$ is a bijection by Proposition~\ref{prop:TTheta}). We show that this face is exposed by $(y_3(\theta),d_\theta)\in \R^{3+1}$,
$$
\quad y_3(\theta)  = y_\theta +(0,0,1).
$$
By direct substitution for all $t\in [0,T]$ we have 
\begin{align}\label{eq:035}
\langle \gamma_1(t), y_3(\theta)\rangle & = \langle \gamma_3(t), y_\theta\rangle + \cos t -1 , & 
\langle \gamma_2(t), y_3(\theta)\rangle & = \langle \gamma_2(t), y_\theta\rangle - \sin t ,\notag \\
\langle \gamma_3(t), y_3(\theta)\rangle & = \langle \gamma_3(t), y_\theta\rangle, & 
\langle \gamma_4(t), y_3(\theta)\rangle & = \langle \gamma_4(t), y_\theta\rangle. 
\end{align}
hence, from the relations \eqref{eq:020}-\eqref{eq:022} and \eqref{eq:035} we have
\begin{equation}\label{eq:034}
\langle \gamma_3(t_\theta), y_3(\theta)\rangle = d_\theta, \qquad    \langle \gamma_3(t), y_3(\theta)\rangle < d_\theta, \quad t \in [0,T]\setminus\{t_\theta\}
\end{equation}
\begin{equation}\label{eq:036}
\langle \gamma_1(t), y_3(\theta)\rangle   <d_\theta, \quad 
\langle \gamma_2(t), y_3(\theta) \rangle <d_\theta, \qquad 
\langle \gamma_4(t), y_3(\theta)\rangle <d_\theta \quad \forall t\in (0,T]. 
\end{equation}
It is clear from \eqref{eq:034} and \eqref{eq:036} that for all $\theta\in [0,T]$ the face $F_{03}(t_\theta)$ is exposed. It can be shown analogously that the symmetrically located faces $F_{02}(t_\theta)$ are exposed. In this case the exposing hyperplanes are defined by $(y_2(\theta), d_\theta)$, where 
$$
y_2(\theta) = y'_\theta+(1,0,0).
$$

We show that for every $\theta\in [0,T]$ the face $F_{01}(\theta)$ is exposed by the pair $(y_1(\theta),d_1(\theta))\in \R^{3+1}$, where 
$$
y_1(\theta)  = (1,-\sin \theta, \cos \theta),\quad 
d_1(t) = 1-\cos \theta.
$$
Indeed, for every $\theta\in [0,T]$ we have 
\begin{align*}
\langle \gamma_1(\theta),y_1(\theta)\rangle & = 1-\cos \theta = d_1(\theta),\\
\langle \gamma_1(t),y_1(\theta)\rangle & = \cos(t-\theta)-\cos \theta < 1-\cos \theta = d_1(\theta) \quad \forall t\in [0,T]\setminus \{\theta\},\\
\langle \gamma_2(t),y_1(\theta)\rangle & = \sin \theta -\sin (t+\theta) <0 \leq d_1(\theta)  \quad \forall t\in (0,T],\\
\langle \gamma_3(t),y_1(\theta)\rangle & = -\sin t -\sin \theta(1-\cos t) <0 \leq d_1(\theta) \quad \forall t\in (0,T],\\
\langle \gamma_4(t),y_1(\theta)\rangle & = (\cos t -1)-\sin t \sin \theta <0 \leq d_1(\theta) \quad \forall t\in (0,T],
\end{align*}
hence, $F_{01}(\theta)$ is exposed. Analogously, we can show that for every $\theta \in (0,T]$ the symmetric face $F_{04}(\theta)$ is exposed by
$$
y_4(\theta)  = (\cos \theta, \sin \theta, 1),\quad 
d_4(t) = 1-\cos \theta.
$$

%We show that the zero-dimensional faces of $C$ are exposed. 
%For each $i\in I$ and $t\in (0,T]$ the faces $F_{01}(t)$ and $F_{04}(t)$  are 
%exposed by $(y_i(t),d_i(t))\in \R^{3+1}$, $i\in \{1,4\}$, where  
%$$
%y_1(t)  = (1,-\sin t, \cos t),\quad  y_4(t) = (\cos t, \sin t, 1),\quad 
%d_1(t) = d_4(t) = 1-\cos t.
%$$
%For each $\theta \in (0,T]$ (recall that by Proposition~\ref{prop:TTheta} the mapping $\theta \to t_\theta$ is a bijection on $[0,T]$) the faces $F_{02}(t_\theta)$ and $F_{03}(t_\theta)$  are 
%exposed by $(y_i(\theta),d_\theta)\in \R^{3+1}$, $i\in \{2,3\}$, where  
%$$
%y_2(\theta )= y'_\theta + (1,0,0), \quad y_3(\theta)  = y_\theta +(0,0,1).
%$$
%For the remaining point $p_0$ the supporting plane is defined by  $y_0 = (1,0,1)$, $d_0 = 0$.

It remains to show that we have listed all faces of $C$.  From Proposition~\ref{prop:TTheta} the mapping $t_\theta: (0,T]\to (0,T]$ is continuous and bijective, hence, every point on the curve $\gamma_1$ is connected with a point on $\gamma_3$ by a one-dimensional face in a continuous manner, and vice versa. Hence, these faces $F_{11}(\theta)$ together with the point $p_0$ cover the part of the surface of $C$ bounded by $\gamma_1$, $\gamma_3$ and $[p_1,p_3]$. Similar argument works for the part of the surface bounded by $\gamma_2$, $\gamma_4$ and $[p_2, p_4]$. It is evident from the plot in Fig.~\ref{fig:01} that the rest of the surface of $C$ is covered by the two-dimensional faces $F_{2i}$, $i =1,\dots, 4$. Hence, if we have missed any faces, they must belong to either $F_{2i}$, $i= 1,\dots,4$, $F_{1i}(\theta)$, $\theta \in (0,T]$ or $F_{00}=\{p_0\}$. It is evident that we have already listed all zero- and one-dimensional faces that comprise the relative boundaries of the aforementioned faces. 
\end{proof}

\begin{proposition}\label{prop:CFExp} The set $C'$ is facially exposed.
\end{proposition}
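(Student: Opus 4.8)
The plan is to exploit the fact that $C'$ is nothing more than an invertible affine image of $C$, together with the fact that both the face structure and exposedness are invariant under such maps. The heavy lifting has already been done in Proposition~\ref{prop:FacesOfC}, which lists every face of $C$ and shows that each is exposed, so the proof should reduce to transporting that information through the affine map.

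First I would record the map explicitly. By \eqref{eq:DefK} we have $C' = A(C)$, where $A\colon\R^3\to\R^3$ is the affine bijection $A(x) = 2x + c$, with inverse $A^{-1}(x') = \tfrac12(x'-c)$. Since $A$ is an affine isomorphism of $\R^3$, it carries the faces of $C$ onto the faces of $C'$: a convex set $F\subseteq C$ is a face of $C$ if and only if $A(F)$ is a face of $C'$, because the defining property (preservation of proper convex combinations) is plainly affine-invariant. In particular, the faces of $C'$ are exactly the sets $A(F)$ with $F\unlhd C$.

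Next I would check that $A$ sends exposed faces to exposed faces. Suppose $(y,d)$ exposes $F\unlhd C$ in the sense of \eqref{eq:defExposed}. For $x'\in C'$ write $x = A^{-1}(x') = \tfrac12(x'-c)\in C$, so that $\langle y, x\rangle = \tfrac12\langle y, x'\rangle - \tfrac12\langle y, c\rangle$. Setting $y' := \tfrac12 y$ and $d' := d + \tfrac12\langle y, c\rangle$, we obtain $\langle y', x'\rangle = \langle y, x\rangle + \tfrac12\langle y, c\rangle \le d'$ for every $x'\in C'$, with equality precisely when $\langle y, x\rangle = d$, i.e. when $x\in F$, i.e. when $x'\in A(F)$. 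Hence $(y',d')$ exposes $A(F)$ in $C'$.

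Combining the two observations finishes the argument: every face of $C'$ has the form $A(F)$ for some $F\unlhd C$, each such $F$ is exposed by Proposition~\ref{prop:FacesOfC}, and therefore $A(F)$ is exposed in $C'$. There is no genuine obstacle here. The only thing to verify is the transformation rule for the exposing pair, which is a one-line computation using the invertibility of $A$; the real content of the claim is already contained in Proposition~\ref{prop:FacesOfC}, and this proposition merely records that facial exposedness survives the scaling-and-translation \eqref{eq:DefK}.
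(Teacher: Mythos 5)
Your proof is correct and follows essentially the same route as the paper: both reduce the claim to Proposition~\ref{prop:FacesOfC} via the observation that exposedness is preserved under the invertible affine map $x\mapsto 2x+c$; you merely spell out the transformation rule for the exposing pair, which the paper asserts without computation. The only (harmless) gap in both arguments is the improper face $C'$ itself, which is not listed in Proposition~\ref{prop:FacesOfC} but is exposed by $(0_3,0)$ as noted in Section~\ref{sec:prelim}.
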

\begin{proof} Since $C'$ is obtained from $C$ via an affine transformation, and hence facial exposedness of $C$ yields facial exposedness of $C'$, it is sufficient to show that $C$ is facially exposed. Observe that any proper face of $C$ is at most two-dimensional (a face $F \unlhd C$ is proper if $F\neq C$), hence, Propositon~\ref{prop:FacesOfC} implies that all proper faces of $C$ are exposed. Hence, $C$ is exposed.
\end{proof}

\begin{proposition}\label{prop:SpanF} Let 
$F = \cone \left(\{1\}\times (2\co\{\gamma_3, \gamma_4\}+\{c\})\right)$, where $c = (1/2,0,1/2)$. Then $F^\perp =  \lspan \{(1, 0,0,-2)\}$.
\end{proposition}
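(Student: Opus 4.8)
The plan is to exploit the fact that the curves $\gamma_3$ and $\gamma_4$ both have vanishing third coordinate. First I would record that, writing a point of $\R^3$ as $(x,y,z)$, the definitions \eqref{eq:curves} give third coordinate $0$ for both $\gamma_3(t)$ and $\gamma_4(t)$ at every $t$, so that the face $\co\{\gamma_3,\gamma_4\}=F_{24}$ lies entirely in the plane $\{z=0\}$. Scaling by $2$ preserves this, and adding $c=(1/2,0,1/2)$ shifts the third coordinate to $1/2$; hence the set $S:=2\co\{\gamma_3,\gamma_4\}+\{c\}$ lies in the plane $\{z=1/2\}$. Consequently every element of $F=\cone(\{1\}\times S)$ has the normal form $\lambda(1,x,y,\tfrac12)$ with $\lambda\ge 0$ and $(x,y,\tfrac12)\in S$.

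With this in hand, the inclusion $\lspan\{(1,0,0,-2)\}\subseteq F^\perp$ is immediate: for any such point,
\[
\langle (1,0,0,-2),\,\lambda(1,x,y,\tfrac12)\rangle = \lambda\bigl(1-2\cdot\tfrac12\bigr)=0 .
\]
The same computation shows $F\subseteq\{w\in\R^4:\ w_0-2w_3=0\}$, a three-dimensional linear subspace, and therefore $\dim\lspan F\le 3$, i.e.\ $\dim F^\perp\ge 1$.

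It remains to prove the reverse dimension bound $\dim F^\perp\le 1$, equivalently $\dim\lspan F\ge 3$; this is the only step with any content. Here I would use that $\co\{\gamma_3,\gamma_4\}=F_{24}$ is a genuinely two-dimensional face (Proposition~\ref{prop:FacesOfC}), so $S$ is two-dimensional and its affine hull is the whole plane $\{z=\tfrac12\}$. Thus $\aff(\{1\}\times S)$ is a two-dimensional affine subspace of $\R^4$ that avoids the origin (all its points have first coordinate $1$), and the linear span of such a subspace has dimension $2+1=3$. Since $\lspan F=\lspan(\{1\}\times S)=\lspan\bigl(\aff(\{1\}\times S)\bigr)$, we conclude $\dim\lspan F=3$. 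Alternatively, and more concretely, one may exhibit three linearly independent members of $F$, for instance the images $(1,2p_0+c)$, $(1,2p_3+c)$, $(1,2p_4+c)$ of the affinely independent triple $p_0,p_3,p_4\in\co\{\gamma_3,\gamma_4\}$, and verify that the resulting matrix has rank $3$.

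Combining the two bounds gives $\dim F^\perp=1$, and since the nonzero vector $(1,0,0,-2)$ lies in $F^\perp$ we obtain $F^\perp=\lspan\{(1,0,0,-2)\}$, as claimed. The main obstacle, such as it is, is purely bookkeeping: making sure $S$ is two- (not one-) dimensional so that passing to the cone raises the dimension to exactly three; every other step is a one-line inner-product computation.
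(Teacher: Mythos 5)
Your proof is correct, and it reaches the conclusion by a slightly different route than the paper. The paper picks the three points $q_i=(1,2p_i+c)$, $i=0,3,4$, notes that $\aff\{p_0,p_3,p_4\}=\aff\{\gamma_3,\gamma_4\}$ so that these three points span $\lspan F$, and then \emph{solves} the resulting $3\times 4$ homogeneous linear system $\langle y,q_i\rangle=0$ to read off $F^\perp=\lspan\{(1,0,0,-2)\}$ directly. You instead \emph{guess} the normal vector from the observation that $\gamma_3,\gamma_4$ lie in the plane $\{z=0\}$ (so $S$ lies in $\{z=\tfrac12\}$ and $F$ in the hyperplane $w_0-2w_3=0$), verify orthogonality with a one-line computation, and then close the argument with a dimension count: $\dim\lspan F\le 3$ from the hyperplane containment and $\dim\lspan F\ge 3$ from the two-dimensionality of $F_{24}$ plus the fact that homogenizing an affine set missing the origin raises the dimension by one. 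The two arguments rest on the same underlying fact (the affine independence of $p_0,p_3,p_4$, equivalently that $F_{24}$ is genuinely two-dimensional, which Proposition~\ref{prop:FacesOfC} supplies); your version trades the explicit row reduction for a structural observation and is, if anything, a little cleaner, while the paper's version produces the answer without needing to guess it first. Your fallback of exhibiting the three points $(1,2p_i+c)$ and checking rank $3$ is exactly the paper's computation, so nothing is missing.
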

\begin{proof} 
Observe that the points $q_i = (1,2p_i+c)\in \R^4$, $i= 0,3,4$, where $p_i$'s are defined by \eqref{eq:defp}, belong to $F$. Moreover, it is not difficult to observe that the affine hull $\aff \{p_0,p_3,p_4\}$ coincides with $\aff\{\gamma_3,\gamma_4\}$. Hence, $\lspan F = \lspan\{q_i,\, i\in \{0,3,4\}\}$. For any point $y = (y_0,\bar y)\in F^\perp$ we have  
$$
\langle y, q_i\rangle = 0, \quad i= 0,3,4,
$$
or, equivalently,
$$
\left[
\begin{array}{cccc}
	1 & \tfrac{1}{2} & 0& \tfrac{1}{2} \\
	1 & \tfrac{1}{2}-\sqrt{2}    & 2-\sqrt{2} & \tfrac{1}{2}\\ 
	1 & \sqrt{2}-\tfrac{3}{2}    & \sqrt{2} & \tfrac{1}{2} 
\end{array}
\right]y
= 0.
$$
Solving this linear system, we obtain $y_1=y_2=0$, $y_3=-2y_0$ (where $(y_1, y_2, y_3)=\bar y$), hence, $F^\perp =  \lspan \{(1, 0,0,-2)\}$.
\end{proof}

%\begin{proposition}\label{prop:CurveInKStar}
%Let
%\begin{equation}\label{eq:DefChi}
%\chi(\theta)  = \left(-1, \frac{y_\theta}{d'_\theta}\right),\quad \theta \in (0,T),
%\end{equation}
%where $d'_\theta= 2d_\theta+\tfrac{1}{2}\cos(t_\theta - \theta)$, and $y_\theta$ and $d_\theta$ are defined by \eqref{eq:DefThetaThingies}.
%Then 
%\begin{equation}\label{eq:004}
%\chi = \chi((0,T))\subset K^\circ. 
%\end{equation}
%\end{proposition}
%\begin{proof} It is shown in the proof of Proposition~\ref{prop:FacesOfC} that the pairs $(y_\theta, d_\theta)$ expose faces of $C$. It is not difficult to observe that then the pairs $(y_\theta,d'_\theta )$ expose the relevant faces of $C'$ for all $\theta\in (0,T)$. Indeed, let $y = 2x+c$, where $x\in C$. Then
%$$
%\langle y_\theta, y\rangle = 2\langle y_\theta, x\rangle +\tfrac{1}{2}\cos (t_\theta - \theta),
%$$
%which yields the desired relation.
%
%We have by the definition of the polar set
%$$
%\frac{y_\theta}{d'_\theta}\in (C')^\circ \quad \forall \theta \in (0,T).
%$$
%It is not difficult to observe that $0\in \interior C'$ (for example, by checking that the only solution of the system
%$$
%\langle 2p_i+c, x\rangle \leq 0, \; i \in \{0,\dots, 5\}
%$$
%is $x^* = 0$, hence, zero can not be separated from $C'$), 
%hence, \eqref{eq:004} follows from Proposition~\ref{prop:PolarCone}.
%\end{proof}

\subsection{Proof of Theorem~\ref{thm:counterex}}\label{s:proof}

The set $K$ defined by \eqref{eq:DefK} is obviously a convex cone and is closed by Proposition~\ref{prop:FacesKfromC}. We first prove that $K$ is facially exposed (Proposition~\ref{prop:KFExp})
and then show that $K$ is not nice (Proposition~\ref{prop:KNotNice}). These two results together yield Theorem~\ref{thm:counterex}.

\begin{proposition}\label{prop:KFExp} The cone $K$ is facially exposed.
\end{proposition}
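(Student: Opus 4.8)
The plan is to invoke Proposition~\ref{prop:IfCFExpKFExp} directly, since $K$ is by construction the homogenization $\cone(\{1\}\times C')$ of the three-dimensional compact set $C'$. First I would check the hypotheses of that proposition for $C'$: it must be a compact convex subset of $\R^3$. The set $C = \co\{\gamma_1,\gamma_2,\gamma_3,\gamma_4\}$ is the convex hull of finitely many compact arcs and is therefore itself compact and convex; since $C' = 2C+c$ is the image of $C$ under the invertible affine map $x\mapsto 2x+c$, the set $C'$ is again compact and convex.

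Next I would recall that $C'$ is facially exposed, which is exactly the content of Proposition~\ref{prop:CFExp}. With $C'$ compact, convex and facially exposed, Proposition~\ref{prop:IfCFExpKFExp} applies verbatim with $n=3$, and its conclusion is precisely that $K = \cone(\{1\}\times C')$ is facially exposed. This is the assertion of the present proposition, so the argument is a one-line combination of the two earlier results.

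I do not expect any genuine obstacle at this stage, because the entire substance has already been discharged: the nontrivial work lies in the enumeration of all proper faces of $C$ and the verification that each is exposed (Proposition~\ref{prop:FacesOfC}), its transfer to $C'$ via the affine map (Proposition~\ref{prop:CFExp}), and the general lifting fact that homogenization preserves facial exposedness for compact convex sets (Proposition~\ref{prop:IfCFExpKFExp}). The only point requiring a moment's care is to confirm that the affine transformation $x\mapsto 2x+c$ preserves compactness, convexity, and facial exposedness; all three are immediate since the face lattice and its exposedness are invariant under invertible affine maps. Once this is noted, the present statement follows as an immediate corollary.
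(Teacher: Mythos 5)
Your proposal is correct and matches the paper's own proof, which likewise combines Proposition~\ref{prop:CFExp} with Proposition~\ref{prop:IfCFExpKFExp}; your additional check that $C'$ is compact and convex is a reasonable bit of extra care but does not change the argument.
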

\begin{proof} 
By Proposition~\ref{prop:CFExp} the set $C'$ is facially exposed, hence, by Proposition~\ref{prop:IfCFExpKFExp} the cone $K= \cone(\{1\}\times C)$ is facially exposed.
\end{proof}

We need the following technical result for the proof of Proposition~\ref{prop:KNotNice}.
\begin{proposition}\label{prop:Technical} For every $\alpha \in \R$ there exists $t_\alpha > 0$ such that 
$$
\varphi_\alpha(t) = \alpha(\cos t -1) + \sin t >0 \quad \forall t\in (0, t_\alpha). 
$$ 
\end{proposition}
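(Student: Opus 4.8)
The plan is to exploit the fact that $\varphi_\alpha$ vanishes at the origin and has strictly positive derivative there, so that it is strictly increasing—and therefore strictly positive—on a sufficiently small right-neighbourhood of $0$. This is the natural approach because the claimed inequality is an assertion purely about the behaviour of $\varphi_\alpha$ near $t=0$, where both $\cos t - 1$ and $\sin t$ vanish.

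First I would record the value at the left endpoint: $\varphi_\alpha(0) = \alpha(\cos 0 - 1) + \sin 0 = 0$, so the inequality degenerates to equality exactly at $t=0$, which is why the conclusion is stated on the open interval $(0,t_\alpha)$. Next I would differentiate, obtaining $\varphi_\alpha'(t) = -\alpha \sin t + \cos t$, a function that is continuous (indeed smooth) on all of $\R$, and evaluate it at the origin to get $\varphi_\alpha'(0) = \cos 0 = 1 > 0$. The crucial point is that this value is independent of $\alpha$, because the $\alpha$-dependent term $-\alpha\sin t$ vanishes at $t=0$.

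Then, by continuity of $\varphi_\alpha'$ and $\varphi_\alpha'(0)=1>0$, there exists some $t_\alpha > 0$ such that $\varphi_\alpha'(t) > 0$ for every $t \in [0, t_\alpha)$. Consequently $\varphi_\alpha$ is strictly increasing on $[0,t_\alpha)$, and combining this with $\varphi_\alpha(0) = 0$ yields $\varphi_\alpha(t) > \varphi_\alpha(0) = 0$ for all $t \in (0, t_\alpha)$, which is precisely the desired conclusion. (Alternatively, one could substitute the Taylor expansions $\cos t - 1 = -\tfrac{t^2}{2} + O(t^4)$ and $\sin t = t + O(t^3)$ to write $\varphi_\alpha(t) = t\bigl(1 - \tfrac{\alpha}{2}t + O(t^2)\bigr)$, whose positivity for small $t>0$ is immediate.)

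There is essentially no genuine obstacle here: the statement is elementary, and the only mild care required is to make the choice of $t_\alpha$ depend on $\alpha$, which is unavoidable since the size of the neighbourhood on which $\varphi_\alpha'$ stays positive shrinks as $|\alpha|$ grows. No uniformity in $\alpha$ is claimed or needed, so the continuity-of-derivative argument suffices in full generality.
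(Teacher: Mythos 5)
Your argument is correct: $\varphi_\alpha(0)=0$, $\varphi_\alpha'(0)=1>0$, and continuity of $\varphi_\alpha'$ give a right-neighbourhood of $0$ on which $\varphi_\alpha$ is strictly increasing, hence strictly positive away from the endpoint. This is a genuinely different mechanism from the paper's. The paper splits into cases: for $\alpha\leq 0$ it observes directly that both terms are nonnegative and the sine term is strictly positive on $(0,\tfrac{\pi}{2})$, so one may take $t_\alpha=\tfrac{\pi}{2}$; for $\alpha>0$ it invokes the elementary inequalities $\cos t>1-\tfrac{t^2}{2}$ and $\sin t>t-\tfrac{t^3}{6}$ for $t>0$ to obtain the explicit lower bound $\varphi_\alpha(t)>t\left(1-\tfrac{t}{2}\alpha-\tfrac{t^2}{6}\right)$, and then chooses $t_\alpha$ so that the bracketed factor stays positive. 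The paper's route is constructive — it exhibits a concrete $t_\alpha$ (essentially any $t_\alpha$ with $\tfrac{t}{2}\alpha+\tfrac{t^2}{6}<1$ on $(0,t_\alpha)$) without appealing to the intermediate step of locating where $\varphi_\alpha'$ changes sign — whereas yours is shorter, avoids the case distinction, and needs only the value of the derivative at a single point, at the cost of a non-explicit $t_\alpha$. Since the proposition only asserts existence of $t_\alpha$, either suffices; your parenthetical Taylor-expansion remark is in fact a sketch of the paper's own argument made rigorous by replacing the $O(\cdot)$ terms with the one-sided polynomial bounds above.
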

\begin{proof} First assume that $\alpha \leq 0$. Then $\varphi_\alpha(t)>0$ for all $t\in (0,\tfrac{\pi}{2})$, and we let $t_\alpha:= \tfrac{\pi}{2}$.
Consider the case when $\alpha >0 $. Observe that for all $t>0$
$$
\cos t > 1-\frac{t^2}{2}, \qquad \sin t > t- \frac{t^3}{6},
$$
therefore, for $t>0$ we have 
$$
\alpha(\cos t -1) + \sin t > \alpha\left(1-\frac{t^2}{2}-1\right)+t- \frac{t^3}{6} = t\left(1-\frac{t}{2}\alpha -\frac{t^2}{6}\right).
$$
Choose $t_\alpha>0$ such that 
$$
\frac{t}{2}\alpha +\frac{t^2}{6}<1 \quad \forall t\in (0,t_\alpha),
$$
hence, $\varphi_\alpha(t) >0$ for all $t\in (0,t_\alpha)$.
\end{proof}

\begin{proposition}\label{prop:KNotNice} The cone $K$ is not nice.
\end{proposition}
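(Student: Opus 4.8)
The plan is to use the particular face $F=\cone\bigl(\{1\}\times(2\co\{\gamma_3,\gamma_4\}+\{c\})\bigr)$ singled out in Proposition~\ref{prop:SpanF}, which is a face of $K$ by Proposition~\ref{prop:FacesKfromC} (it is the cone over the face $2\co\{\gamma_3,\gamma_4\}+\{c\}$ of $C'$), and for which $F^\perp=\lspan\{(1,0,0,-2)\}$. Since $F^*=\overline{K^*+F^\perp}$ by the general fact \cite[Remark~1]{PatakiFExpNice} already invoked in the proof of Theorem~\ref{thm:3Donly}, the set $K^*+F^\perp$ is closed if and only if it equals $F^*$. Hence it suffices to exhibit a single point $w\in F^*$ that is \emph{not} in $K^*+F^\perp$; producing such a $w$ shows at once that $K^*+F^\perp$ is not closed and therefore that $K$ is not nice.

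I would take the candidate $w=(0,0,1,0)$. Checking $w\in F^*$ is immediate: a generic point of $F$ is $\lambda(1,2\bar x+c)$ with $\lambda\ge 0$ and $\bar x\in\co\{\gamma_3,\gamma_4\}$, and $\langle w,(1,2\bar x+c)\rangle=(2\bar x+c)_2=2\bar x_2$ (the subscript denoting the second coordinate in $\R^3$, and using $c_2=0$). Both $\gamma_3(t)=(-\sin t,1-\cos t,0)$ and $\gamma_4(t)=(\cos t-1,\sin t,0)$ have nonnegative second coordinate on $[0,T]$, so every $\bar x\in\co\{\gamma_3,\gamma_4\}$ has $\bar x_2\ge 0$, whence $\langle w,\cdot\rangle\ge 0$ on $F$ and $w\in F^*$.

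The heart of the matter is to show $w\notin K^*+F^\perp$. Any representation $w=z+\mu(1,0,0,-2)$ with $z\in K^*$ and $\mu\in\R$ forces $z=(-\mu,0,1,2\mu)$, so it is enough to rule out $z\in K^*$ for every $\mu$. I would test $z$ against the points $x_t:=(1,2\gamma_1(t)+c)\in K$, $t\in[0,T]$, which lie in $K$ because $\gamma_1(t)\in C$. A short computation gives $2\gamma_1(t)+c=(\tfrac12,-2\sin t,2\cos t-\tfrac32)$ and hence
\[
\langle z,x_t\rangle=4\mu(\cos t-1)-2\sin t=-2\bigl((-2\mu)(\cos t-1)+\sin t\bigr)=-2\,\varphi_{-2\mu}(t),
\]
with $\varphi_\alpha$ exactly the function of Proposition~\ref{prop:Technical}. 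By that proposition $\varphi_{-2\mu}(t)>0$ for all sufficiently small $t>0$ (which lie in $[0,T]$), so $\langle z,x_t\rangle<0$ there and $z\notin K^*$. As $\mu$ was arbitrary, $w\notin K^*+F^\perp$, completing the argument.

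The genuinely delicate part is not this final verification but the \emph{selection} of the face $F$ and the witness $w$: one must locate a functional that is nonnegative on the flat ``leaf'' $\co\{\gamma_3,\gamma_4\}$ yet cannot be repaired into a member of $K^*$ by any shift along $F^\perp$. The reason the repair must fail is structural — adding a multiple of $(1,0,0,-2)$ merely readjusts the $x_0/x_3$ trade-off and does nothing to control the obstruction coming from the adjacent curve $\gamma_1$, which meets $\co\{\gamma_3,\gamma_4\}$ along $\gamma_3$. Proposition~\ref{prop:Technical} is precisely the quantitative certificate that this obstruction never vanishes, and anticipating that $\varphi_\alpha$ would play this role is the main conceptual hurdle.
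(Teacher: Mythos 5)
Your argument is correct and is essentially the paper's own proof: same face $F$, same use of $\overline{K^*+F^\perp}=F^*$, and the same obstruction coming from the curve $\gamma_1$ via Proposition~\ref{prop:Technical}. Your witness $w=(0,0,1,0)$ is just the paper's witness (there $q=(-1,0,-1,2)$, phrased for $K^\circ$ rather than $K^*$) shifted by the generator $(1,0,0,-2)$ of $F^\perp$, so the two choices are equivalent.
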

\begin{proof}
Observe that there is a one-to-one correspondence between the faces of $C$ and $C'$, namely, for every  $E_0 \unlhd C$ the set $2E_0+\{c\}$ is a face of $C'$ and vice versa. In particular, the face $F_{24} = \co \{\gamma_3, \gamma_4\}$ of $C$ (see Proposition~\ref{prop:FacesOfC}) corresponds to $E := 2 F_{24}+\{c\}$, hence, $E\unlhd C'$. Proposition~\ref{prop:FacesKfromC} yields $F = \cone \left(\{1\}\times E\right) \unlhd K$.
By Proposition~\ref{prop:SpanF}
$$
F^\perp = \lspan\{u\}, \qquad u = (1,0,0,-2).
$$
We show that $q := (-1, 0, -1, 2)\in \overline {(K^\circ+F^\perp)}\setminus (K^\circ+F^\perp)$, hence the set $K^\circ+F^\perp= - (K^*+F^\perp)$ is not closed, and $K $ is not nice.

We first show that $q\notin (K^\circ+F^\perp)$. Assume the contrary. Then $q=p+\lambda u$ for some $p\in K^\circ, \lambda \in \R$. Let
$$
\gamma'(t) := (1, 2\gamma_1(t)+c), \quad t\in \left[0,  T\right].
$$
Since $\gamma'= \gamma'([0,T])\subset (\{1\}\times C')\subset K$, and $p\in K^\circ$, 
\begin{equation}\label{eq:002}
\psi(t): = \langle \gamma'(t), p\rangle \leq 0 \quad \forall t\in [0,  T].
\end{equation}
Observe that
$$
\psi(t) =\langle \gamma'(t), p\rangle= \langle \gamma'(t), q-\lambda u\rangle = 2(2(\lambda+1)(\cos t -1) +\sin t).
$$
Let $\alpha: = 2(\lambda+1)$ in Proposition~\ref{prop:Technical}. Then there exists $t_\alpha>0$
such that $\psi(t)=2\varphi_\alpha(t)>0$ for all $t\in (0,t_\alpha)$. This contradicts \eqref{eq:002}, hence, our assumption is wrong and $q\notin K^\circ+F^\perp$. 

It remains to show that 
\begin{equation}\label{eq:003}
q\in \overline{K^\circ+F^\perp}.
\end{equation}
It follows from \cite[Remark~1]{PatakiFExpNice} that $\overline{K^\circ+F^\perp} =F^\circ$, hence, \eqref{eq:003} is equivalent to showing 
$q\in F^\circ$.
By direct substitution we have for all $t\in [0,T]$  
\begin{equation}\label{eq:030}
\langle q,(1,2\gamma_3(t)+c)\rangle = 2(\cos t-1 ) \leq 0,\qquad 
\langle q,(1,2\gamma_4(t)+c)\rangle  = - 2\sin t \leq 0.
\end{equation}
Recall that $F = \cone (\{1\}\times (2\{\gamma_3\cup \gamma_4\}+c))$, hence, \eqref{eq:030} yields 
$$
\langle q , z\rangle \leq 0 \quad \forall z\in F,
$$
therefore $q\in F^\circ$, and \eqref{eq:003} holds.
\end{proof}

\section*{Acknowledgements} The author is grateful to Minghui Liu and G\'abor Pataki for correcting several mistakes and inaccuracies in an earlier version of the manuscript, and to the anonymous referees for their relevant and insightful remarks, which improved the quality of the paper. 

\bibliographystyle{plain}
\bibliography{conic}

\end{document}